\documentclass[onecolumn]{IEEEtran}
\usepackage{amsmath,amssymb,amsthm,mathtools}
\mathtoolsset{showonlyrefs}
\usepackage{cite}
\usepackage{graphicx}
\usepackage[hidelinks]{hyperref}
\usepackage{tikz}
\usetikzlibrary{arrows.meta,decorations.pathmorphing}
\usepackage{enumerate}

\usepackage{setspace}
\usepackage{verbatim}

\newcommand{\R}{\mathbb{R}}
\newcommand{\GOE}{\operatorname{GOE}}

\newcommand{\avg}[1]{\left\langle #1\right\rangle}

\newtheorem{theorem}{Theorem}
\newtheorem{lemma}[theorem]{Lemma}
\newtheorem{proposition}[theorem]{Proposition}
\newtheorem{corollary}[theorem]{Corollary}
\newtheorem{remark}[theorem]{Remark}

\title{Gain of Entrainment in Nonlinear Cascades }

\author{Ram~Massas and Michael~Margaliot%
\thanks{RM and MM are with the School of Electrical Engineering,
Tel Aviv University, Tel Aviv 69978, Israel.
Corresponding author: Michael Margaliot (michaelm@tauex.tau.ac.il).}%
\thanks{The research of MM  was partially supported by the Israel Science
Foundation under Grant 221/24.}}
\usepackage{amsthm}
\newtheorem{example}[theorem]{Example}

\begin{document}

\maketitle

\begin{abstract}
We consider the gain of entrainment (GOE)--the difference between the average steady-state output under a periodic input and the steady-state output under a constant input with the same mean--for an $n$-stage feedforward cascade of stable first-order filters interleaved with static nonlinearities. The main result is an exact decomposition of~GOE as a weighted sum of local Jensen gaps, where each gap quantifies the mean shift generated by a  nonlinearity, and each weight is a product of downstream incremental  gains divided by linear time constants. We provide a Bregman-divergence interpretation of the decomposition, and a  second-order small-amplitude of GOE separating local curvature, fluctuation energy, and differential gains.
We demonstrate the theoretical results using a Michaelis–Menten cascade showing that any nonconstant periodic feeding strictly reduces the average terminal product relative to constant feeding with the same mean.
\end{abstract}

\begin{IEEEkeywords}
Bregman divergence, convexity, entrainment, Jensen gap,
 periodic forcing, chemical reaction network.
\end{IEEEkeywords}

\section{Introduction}
\label{sec:introduction}

Many natural and artificial systems and processes are subject to $T$-periodic excitation or forcing, and 
their proper functioning requires synchronization or entrainment to a $T$-periodic  behavior pattern. Synchronous generators must entrain to the frequency of the grid. Internal clocks in biological organisms must entrain to the 24-hour solar day.  
Phase-locked loops synchronize to periodic reference signals, and
cardiac pacemaker cells synchronize to periodic electrical stimulation.

Entrainment is by no means a generic property of nonlinear systems~\cite{NIKOLAEV20181232,takac_sub_harmonics}.
However, there are important classes   of dynamical systems that do  entrain to periodic excitations. These include contractive systems~\cite{AminzareSontag2014,Russo2010}, totally positive differential systems~\cite{TPDS_MAR_SONATG},
and monotone dynamical systems with a first integral~\cite{Ji-Fa-periodic}.

Entrainment raises the question of whether periodic forcing can do more than
induce synchronization, namely, whether it can also enhance system performance.
The concept of \emph{gain of entrainment}~(GOE) allows  addressing  this question in a rigorous manner. To explain this, consider the nonlinear dynamical system 
\begin{align} \label{eq:nonlinsys}
\dot x&=f(x,u), \nonumber \\
y&=h(x,u),
\end{align}
with state
vector \(x\), input \(u\), and a scalar performance output~$y$.
The system is said to \emph{entrain} if for any \(T\)-periodic input \(u\)  any solution of~\eqref{eq:nonlinsys} converges to a unique~$T$-periodic limit cycle~$\gamma^u$, and thus the output converges to  the $T$-periodic output
\[
y^\gamma(t):=h(\gamma^u(t),u(t)).
\]
Suppose that the objective is to maximize the output. Since this converges to~$y^\gamma$, it is natural to consider the time average
\[
\avg{y^\gamma} := \frac{1}{T}\int_0^T h(\gamma^u(t),u(t))\,dt.
\]
As an illustration, the output may represent the average traffic flow through a
road network regulated by periodically varying traffic lights, and the goal is   
to maximize throughput. As another example, the output may represent the product yield of a chemical reactor operated under   periodic     feeding.

Let
\[
\avg{u}:=\frac1T\int_0^T u(t)\,dt
\]
denote the mean value of the input.
Since the system entrains, when  the constant input \(\avg{u}\) is applied to~\eqref{eq:nonlinsys}, the state converges to an
equilibrium,  denoted~\(e^{\avg{u}}\), and  thus the output converges to  the constant value
\[
h(e^{\avg{u}},\avg{u}).
\]
The gain of entrainment associated with the $T$-periodic input \(u\) is then defined
by
\[
\GOE(u):=
\frac1T\int_0^T h(\gamma(t),u(t))\,dt
-h(e^{\avg{u}},\avg{u}).
\]
Hence, \(\GOE(v)>0\) means that redistributing the input periodically in time
improves the average output. Conversely,
\(\GOE(v)<0\) means that the temporal variation is detrimental relative to
constant operation, so in this  case entrainment allows 
synchronization to periodic signals, but at the cost of decreasing  the output, on average.

For stable linear dynamics with a linear output, the average periodic state
coincides with the equilibrium associated with the average input. Therefore,
the corresponding~GOE is zero. In nonlinear systems, the situation is more interesting. 
To demonstrate this, we consider
an example from Ref.~\cite{Pavlov2007}, that  studied the frequency response of convergent systems (that are closely related to contractive systems~\cite{RUFFER2013277}).  
\begin{example}\label{exa:nonlin_with_goe}
    Consider the non-linear system:    \begin{align}\label{eq:nolonpav}
    \dot x_1&=-x_1+x_2^2,\nonumber\\
    \dot x_2&=-x_2+u,\nonumber\\
    y&=x_1.
    \end{align}
This is the series interconnection of two (scalar)  contractive  systems,  and is thus a  contractive system (see, e.g.,~\cite{ofir2021sufficient}).
We compare the effect of two controls:
\begin{enumerate}
    \item  $u_1(t)=a\sin(\omega t)$, 
where~$  a,\omega>0$; and
\item 
$u_2(t)=0$.
 \end{enumerate}
Note that $u_1$ is~$T$-periodic with~$T:=2\pi/\omega$,
and that~$u_2(t)=\avg{u_1}$.
For the zero control  the state vector converges  to $e^{\avg{u_1}}=0$, so~$y(t)$ converges to zero. 
For the control~$u(t)=u_1(t) $,   a calculation shows that the state vector 
 converges to a $T$-periodic solution~$\gamma^{u_1}$ satisfying 
\begin{equation*} 
s(\omega)
\gamma_1^{u_1}(t)= 4a^2\omega^2+a^2-2a^2\omega\sin(2t\omega-p(\omega))-a^2\cos( 2t\omega-p(\omega)),  
\end{equation*}
with~$s(\omega):=2(4\omega^4+5\omega^2+1) $ and~$p(\omega):=2\,
 \arctan (\omega)$,
so 
$\frac{1}{T}\int_0^T \gamma_1^{u_1}
(t) \mathrm{d} t 
 = \frac{a^2 }{2(1+\omega^2)} 
$. Thus, 
\begin{align}\label{eq:GOE_ISQUAD}
\GOE(u_1)=\frac{a^2 }{2(1+\omega^2)} ,
\end{align}
which is  positive and depends on both  the excitation amplitude~$a$, and excitation
frequency~$\omega$. 

Note that $\GOE(u_1)$
decreases to zero as $\omega\to\infty$, reflecting the attenuation of high-frequency inputs by the stable
linear filter preceding the quadratic stage.
For a fixed $\omega$, the quadratic dependence of $\GOE(u_1)$ on the control amplitude~$a$ implies the following. If $a\ll 1$ [$a\gg 1$] then $\GOE(u_1)\ll a$ [$\GOE(u_1)\gg a$]. 
\end{example}

Since constant inputs form a special case of periodic inputs, one might expect
that~GOE is always nonnegative, yet this intuition is generally
incorrect. 
Analyzing~GOE is a challenging theoretical
problem 
because in general  nonlinear systems both~$\gamma^u$ and~$e^{\avg{u}}$ are not known explicitly. Maximizing~GOE can   be posed as  an optimal control problem with a constraint on the average control, but 
explicit solutions are known only in relatively simple settings, such as scalar
systems~\cite{rapo_convex}.

  GOE is important in 
  numerous applications 
 including periodically harvested fisheries, photobioreactors operated  by
time-varying light, traffic networks controlled by periodic signal timing,  
drug delivery protocols based on periodic  dosing schedules, gene expression  under periodic  regulation, and
manufacturing systems operating under cyclic production schedules.

In models of
protein translation, one may ask whether periodic variation of initiation or
elongation resources may increase the average production rate.  
Ref.~\cite{GOE_RFM_2023}  considered GOE
in an important nonlinear model from  systems biology called the ribosome flow model~(RFM). 
In this case, the scalar output corresponds to  the protein production rate in mRNA translation,
the $n$-dimensional  state vector corresponds to the densities of ribosomes along~$n$ different sections of the mRNA molecule, and 
and the~$(n+1)$-dimensional  input to the transition rates of ribosomes along different segments  of the mRNA molecule. The RFM is an (almost) contractive system~\cite{weak_contract}, and thus entrains to periodic  inputs~\cite{Margaliot2014Entrain}. 
It was shown in~\cite{GOE_RFM_2023} 
that in several special cases~$\GOE(u)\leq 0$. More recently, and using a different technique, Ref.~\cite{cost_of_entrain} proved that in the RFM we always  have~$\GOE(u)\leq 0$, with equality only in  the degenerate case 
where all the $T$-periodic transition rates are equal, up to multiplication by a positive constant.
Thus,~$\GOE(u)<0$
for any non-trivial periodic modulation, implying  that entrainment to periodic controls always comes with a cost in terms of the protein production rate. 

Ref.~\cite{Katz2024} 
  considered a class of 
  weakly contractive systems under a control~$u(t)=a+\varepsilon \tilde u(t)$, with~$\tilde u$ a $T$-periodic control and~$\varepsilon\in \R$.
  Using   a first-order expression
for the GOE, it was shown   that GOE is
inherently a second- or higher-order effect in~$\varepsilon$.
Ref.~\cite{MassasMargaliot_linear_sys_nonlin_output} investigated systems with
linear dynamics and a static nonlinear output map~$h$, establishing a connection between the
sign of~GOE and the convexity or concavity of~$h$.

Here, we consider   a   cascade (or serial connection) of scalar linear systems with a nonlinear output. Feedforward cascades are a common interconnection structure in engineering, biology, chemistry, communication systems,  and economics.
Understanding how local nonlinearities combine to determine global performance remains a fundamental challenge. 
We analyze GOE of such a cascade. 
This system is contractive, and thus entrains to periodic excitations.   
In a cascade with several nonlinear
stages, the situation is fundamentally different from that of  a single linear system with a nonlinear output~\cite{MassasMargaliot_linear_sys_nonlin_output},
as every nonlinear
stage may create its own mean displacement, and each displacement must then
propagate through all downstream stages before reaching the output.
A downstream stage may amplify or attenuate this displacement and may even reverse its sign. Consequently, the sign and magnitude
of the overall GOE cannot generally be determined by examining the curvature
of a single function.

 The main contributions of this paper are as follows. We derive an exact decomposition of  GOE in the cascade as  a sum of local contributions associated with each nonlinear stage and propagation weights determined by the downstream dynamics. This decomposition separates the generation of average shifts due to nonlinearity from their propagation through the cascade, providing a transparent stage-by-stage interpretation of GOE. We further establish sufficient conditions that determine the sign of the GOE, derive an equivalent representation based on Bregman divergences, and develop a second-order approximation that relates the GOE to the local curvature of the nonlinearities and the differential gains of the cascade. We illustrate the theoretical results  using  a cascade with Michaelis--Menten kinetics, demonstrating how the proposed framework explains the effect of periodic forcing on the average output.
 
For a $T$-periodic  function $q$,   let 
\[ 
    \avg q
    :=
    \frac{1}{T}\int_0^T q(t)\,\mathrm{d}t.
\]
Note that if~$q$  is $C^1$ then
 \begin{align}
     \label{eq:avgderiszero}
 \avg{\dot q}=\frac{1}{T}\int_0^T \dot q(t) \mathrm{d}t =\frac1{T}(q(T)-q(0) )=0. 
\end{align}

\section{Main Results}\label{sec:main}
Consider the   $n$-stage feedforward cascade
\begin{align} \label{eq:cascade-general}
    \dot x_1(t)
    &=
    -a_1x_1(t)+\phi_1\bigl(u(t)\bigr),
    \nonumber\\
    \dot x_i(t)
    &=
    -a_ix_i(t)+\phi_i\bigl(x_{i-1}(t)\bigr),
    \qquad i=2,\ldots,n,
    \\
    y(t)
    &=
    x_n(t),\nonumber 
\end{align}
where~$a_i>0$  for all~$i$, and every~$\phi_i:\R\to\R$ is a continuous   function (and thus maps compact  sets to compact  sets). 
Each state is thus  a stable first-order dynamical filter driven by a
static nonlinear map of the signal arriving from the preceding
stage.

Consider a continuous $T$-periodic   control~$v$. Then~$v$ takes values in a compact set, and thus so does~$\phi_1(v)$. This implies that~$x_1 $ is bounded and converges to a unique~$T$-periodic limit cycle~$\gamma_1^v$. Using this implies that~$x_2 $ is bounded and converges to a
unique~$T$-periodic limit cycle~$\gamma_2^v$. Continuing in this fashion, we have that for any initial condition~$x(0)$ the state~$x$ converges to a 
unique~$T$-periodic limit cycle~$\gamma^v$.

For the constant 
control~$u=\avg{v}$, the state 
$x$ converges to an  equilibrium $e^{\avg{v}}$ with 
\begin{align*}
e_1^{\avg{ v}}& = a_1 ^{-1} \phi_1(  \avg {v}),\nonumber\\
e_i^{\avg{ v} }&= a_i ^{-1} \phi_i (
 e_{i-1}^{\avg{v}}) ,
\quad   i=2,\dots,n.
\end{align*}

Since the output is $y=x_n$, we get that   
\begin{equation}
    \GOE(v)
    =
    \frac{1}{T}
    \int_{0}^{T}\gamma_n^v(t)\,\mathrm{d}t
    -
    e_n^{\avg{ v}}
    =
    \avg{\gamma_n^v}
    -
    e_n^{\avg{ v}}.
    \label{eq:intro-cascade-goe}
\end{equation}

 Note that for the cascade \eqref{eq:cascade-general} 
 it is possible in general to give an explicit \emph{integral representation} for each $\gamma_i^v$, but not   a closed-form elementary expression.

\subsection{Formula for GOE}
Define the mean displacement at stage $i$ by
\begin{equation}
    \Delta_i(v)
    :=
    \avg{\gamma_i^v}-e_i^{\avg{ v}},
    \qquad i=1,\ldots,n.
    \label{eq:delta-def}
\end{equation}
By \eqref{eq:intro-cascade-goe},
\begin{equation}
    \GOE(v)=\Delta_n(v).
    \label{eq:goe-delta}
\end{equation}

It is useful to define
\begin{equation}\label{eq:gamm0}
\gamma_0^v(t) := v(t).
\end{equation}
To identify the     contributions of the nonlinear functions, define
\begin{align}  \label{eq:jensen-internal}
    J_i(v)
    &:=
    \avg{\phi_i(\gamma_{i-1}^v)}
    -
    \phi_i\bigl(\avg{\gamma_{i-1}^v}\bigr),
    \qquad i=1,\ldots,n.
\end{align}
Thus, $J_i$ is the Jensen gap generated at stage $i$ by $\phi_i$.
For every internal stage $i=2,\ldots,n$, define the incremental
gain $  L_i(v)$ by
\begin{equation}\label{eq:secant-gain} 
    L_i(v)
    :=
    \dfrac{
    \phi_i(\avg{\gamma_{i-1}^v})
    -
    \phi_i(e_{i-1}^{\avg {v}})
    }{
    \Delta_{i-1}(v)
    },
\end{equation} 
if 
$ \Delta_{i-1}(v)\neq0$,
and~$L_i(v)=0$, otherwise.
This   ensures that
\begin{equation}\label{eq:ensuresthat}
    \phi_i(\avg{\gamma_{i-1}^v})
    -
    \phi_i(e_{i-1}^{\avg {v}})
    =
    L_i(v)\Delta_{i-1}(v)
\end{equation}
also when $\Delta_{i-1}(v)=0$.
If~\eqref{eq:secant-gain} is well-defined also when~$\Delta_{i-1}(v)=0$ then   the definition~\eqref{eq:secant-gain} is used for all~$v$. 
 Our first main result provides a formula for~$\GOE(v)$ 
as a weighted sum of~$J_k(v)$, $k=1,\dots,n$.
We can now state our first main result. 
\begin{theorem}[Formula for GOE of  a cascade]
\label{thm:exact-decomp}
For every continuous $T$-periodic input $v$, we have 
\begin{equation}  \label{eq:exact-goe}
     \GOE(v)
    =
    \sum_{k=1}^n W_k(v)J_k(v)
    ,
\end{equation}
where
\begin{equation}
    W_k(v)
    :=
    {a_k} ^{-1}
    \prod_{j=k+1}^n
 {a_j}  ^{-1} {L_j(v)} ,
    \qquad k=1,\ldots,n, 
    \label{eq:cascade-weight}
\end{equation}
with the convention that an empty product equals~$1$.
\end{theorem}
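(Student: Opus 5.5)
The plan is to derive a one-step recursion for the mean displacements $\Delta_i(v)$ and then unroll it. First we average the periodic solution. Since $\gamma_i^v$ is a $C^1$ $T$-periodic solution of the $i$th equation in~\eqref{eq:cascade-general}, averaging both sides over one period and using~\eqref{eq:avgderiszero} gives $0=-a_i\avg{\gamma_i^v}+\avg{\phi_i(\gamma_{i-1}^v)}$. With the convention~\eqref{eq:gamm0}, this holds for all $i=1,\dots,n$, so
\[
\avg{\gamma_i^v}=a_i^{-1}\avg{\phi_i(\gamma_{i-1}^v)} .
\]
The equilibrium satisfies the analogous relation $e_i^{\avg v}=a_i^{-1}\phi_i(e_{i-1}^{\avg v})$. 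Here we set $e_0^{\avg v}:=\avg v=\avg{\gamma_0^v}$, so that $\Delta_0(v)=0$.

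Next we subtract the two relations and insert $\pm\phi_i(\avg{\gamma_{i-1}^v})$. This gives
\[
\Delta_i(v)=a_i^{-1}\Bigl[\avg{\phi_i(\gamma_{i-1}^v)}-\phi_i(\avg{\gamma_{i-1}^v})\Bigr]+a_i^{-1}\Bigl[\phi_i(\avg{\gamma_{i-1}^v})-\phi_i(e_{i-1}^{\avg v})\Bigr].
\]
By~\eqref{eq:jensen-internal} the first bracket is $J_i(v)$. For $i\ge2$ the second bracket equals $L_i(v)\Delta_{i-1}(v)$ by~\eqref{eq:ensuresthat}, which also covers the degenerate case $\Delta_{i-1}(v)=0$. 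For $i=1$ the second bracket vanishes because $\avg{\gamma_0^v}=\avg v=e_0^{\avg v}$. We therefore obtain the linear recursion
\[
\Delta_1=a_1^{-1}J_1,\qquad \Delta_i=a_i^{-1}J_i+a_i^{-1}L_i\,\Delta_{i-1},\quad i=2,\dots,n .
\]

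Finally, we show by induction on $m$ that $\Delta_m(v)=\sum_{k=1}^m a_k^{-1}\bigl(\prod_{j=k+1}^m a_j^{-1}L_j(v)\bigr)J_k(v)$. The base case $m=1$ is the first equation of the recursion. For the inductive step, multiplying the expression for $\Delta_{m-1}$ by $a_m^{-1}L_m$ extends each product by the factor $j=m$, and adding $a_m^{-1}J_m$ supplies the $k=m$ term, whose product is empty. Setting $m=n$ and using~\eqref{eq:goe-delta} yields~\eqref{eq:exact-goe} with the weights~\eqref{eq:cascade-weight}.

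None of these steps is a real obstacle. The point needing the most care is justifying the averaging in the first step: $\gamma_i^v$ must be $C^1$ and $T$-periodic, which holds because it solves a linear ODE with continuous periodic forcing $\phi_i(\gamma_{i-1}^v)$. The other point is handling the $\Delta_{i-1}=0$ case, which is exactly what the convention in~\eqref{eq:secant-gain} is designed for.
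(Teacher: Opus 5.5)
Your proposal is correct and follows the paper's proof: average each stage over a period, subtract the equilibrium relation, insert $\pm\phi_i(\avg{\gamma_{i-1}^v})$ to obtain $\Delta_i=a_i^{-1}\bigl(J_i+L_i\Delta_{i-1}\bigr)$, and unroll this recursion to $i=n$. Your additions (treating $i=1$ uniformly via $e_0^{\avg v}:=\avg v$, the explicit induction, and the $C^1$ justification) are valid refinements of steps the paper leaves implicit.
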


Eq.~\eqref{eq:exact-goe} separates two mechanisms determining~GOE. The \emph{local term}
$J_k$ measures the mean shift  created  by the curvature of $\phi_k$,
whereas the weight~$W_k$ measures how this shift is \emph{propagated} from stage~$k$
through the downstream stages~$k+1,\ldots,n$ to the output. In particular,
a negative downstream   gain reverses the sign of every contribution
that passes through it. 
 
\begin{proof}
Averaging the first equation of the cascade along the periodic orbit gives
$
    0
    =
    -a_1\avg{\gamma_1^v}
    +
    \avg{\phi_1(v)}.
$
Subtracting the identity 
$    0
    =
    -a_1e_1^{\avg {v}}
    +
    \phi_1(\avg {v})
$ yields
\begin{equation}
    \Delta_1(v)
    =a_1^{-1}
    {J_1(v)} .
    \label{eq:delta-first}
\end{equation}
For $i=2,\ldots,n$, periodic averaging and subtraction of the corresponding
equilibrium equation give
\begin{align*}
    a_i\Delta_i(v)
    &=
    \avg{\phi_i(\gamma_{i-1}^v)}
    -
    \phi_i(e_{i-1}^{\avg {v}})
    \nonumber\\
    &=
    J_i(v)
    +
    \phi_i(\avg{\gamma_{i-1}^v})
    -
    \phi_i(e_{i-1}^{\avg {v}})
    \nonumber\\
    &=
    J_i(v)+L_i(v)\Delta_{i-1}(v), 
\end{align*}
where the last step follows from \eqref{eq:ensuresthat}. Thus,  
\begin{equation*}
    \Delta_i(v)
    =a_i^{-1} \left (
    J_i(v)
    +
    L_i(v)\Delta_{i-1}(v) \right).
    \label{eq:delta-recursion2}
\end{equation*}
Starting from \eqref{eq:delta-first}, expanding the recursion forward to
$i=n$, and using \eqref{eq:goe-delta} yields \eqref{eq:exact-goe}.
\end{proof}

\begin{example}
    Consider   again system~\eqref{eq:nolonpav} in Example~\ref{exa:nonlin_with_goe}. 
Define
   $ \xi_1:=x_2$ and
    $\xi_2:=x_1$.
Then  
\begin{align}\label{eq:samae}
    \dot \xi_1(t)
    &=
    -\xi_1(t)+u(t),\nonumber\\
    \dot \xi_2(t)
    &=
    -\xi_2(t)+\xi_1^2(t),\nonumber\\
    y(t)&=\xi_2(t),
\end{align}
which is   of the form
\eqref{eq:cascade-general} with~$n=2$, 
$\phi_1(s)=s$, $\phi_2(s)=s^2$, and~$a_1=a_2=1$.
In this case,
\[
J_1(u)=  \avg{\phi_1(u)}-\phi_1(\avg{ u})=0,
\]
and~\eqref{eq:exact-goe} 
gives
\begin{align*}
    \GOE (u) &= W_1(u) J_1(u)+W_2(u) J_2(u) \\
          &=     J_2(u)\\
          &=
       \avg{\phi_2(\gamma_{1}^u)}
    -
    \phi_2\bigl(\avg{\gamma_{1}^u}\bigr)\\
    & =   \avg{ (\gamma_{1}^u)^2}
    -
 \avg{\gamma_{1}^u} ^2
    .
\end{align*}

Let
    $u_1(t)=a\sin(\omega t)$, with~$a,\omega>0$. This is  $T$-periodic with~$T:=2\pi/\omega$. 
 The corresponding 
 unique periodic solution of~\eqref{eq:samae} 
 satisfies
\begin{align*}
    \gamma_1^{u_1}(t)
    &=
    \frac{a}{1+\omega^2}\sin(\omega t)
    -
    \frac{a\omega}{1+\omega^2}\cos(\omega t).
    \label{eq:gamma2-positive}
\end{align*}
This gives  $\avg{\gamma_1^{u_1}}=0$,   
and
  $  \avg{(\gamma_1^{u_1})^2}
    =    \frac{a^2}{2(1+\omega^2)}$,
    so
\begin{equation*} 
    \GOE(u_1)
    =
    \frac{a^2}{2(1+\omega^2)}
     ,
\end{equation*}
and this agrees with~\eqref{eq:GOE_ISQUAD}. 
\end{example}

\begin{remark}
    If for some index $k$ the function $\phi_k$   is affine then \eqref{eq:jensen-internal} implies that
$J_k(v) =0 $ and Thm. \ref{thm:exact-decomp} implies that the $k$th stage in the cascade    contributes  no local Jensen gap to GOE (but it may still transmit, amplify, attenuate, or reverse upstream contributions).
\end{remark}

An important consequence of Theorem~\ref{thm:exact-decomp} is that it yields simple sufficient conditions guaranteeing the sign of the GOE in a cascade. Notably, these conditions can be verified without computing the equilibrium nor the periodic limit cycle. The following example demonstrates this.

\begin{corollary}
\label{cor:sign}
Suppose that  the functions~$\phi_1,\ldots,\phi_n$ are convex [concave], and that the functions 
$\phi_2,\ldots,\phi_n$ are nondecreasing. Then
    $\GOE(v) \geq 0$
[$\GOE(v)\leq 0$] for every continuous periodic input~$v$. 
\end{corollary}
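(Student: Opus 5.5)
The plan is to apply Theorem~\ref{thm:exact-decomp} directly and show that every term $W_k(v)J_k(v)$ in \eqref{eq:exact-goe} has the required sign. I treat the convex case; the concave case follows by the same argument with inequalities reversed, or by applying the convex case to the cascade with $-\phi_1$ and the maps $s\mapsto -\phi_i(-s)$ for $i\ge 2$, which is the cascade satisfied by $-x$. For $i\geq 2$, $s\mapsto -\phi_i(-s)$ is convex and nondecreasing whenever $\phi_i$ is concave and nondecreasing, and the GOE changes sign.

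First, I would show $J_k(v)\ge 0$ for all $k$. Each $\gamma_{k-1}^v$ is continuous and $T$-periodic (with $\gamma_0^v=v$). Normalized Lebesgue measure on $[0,T]$ is a probability measure, so Jensen's inequality for the convex $\phi_k$ gives $\avg{\phi_k(\gamma_{k-1}^v)}\ge \phi_k(\avg{\gamma_{k-1}^v})$. This is exactly $J_k(v)\geq 0$ by \eqref{eq:jensen-internal}.

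Second, I would show $L_j(v)\ge 0$ for $j=2,\dots,n$. If $\Delta_{j-1}(v)=0$, then $L_j(v)=0$ by definition. Otherwise, \eqref{eq:secant-gain} is a difference quotient $(\phi_j(p)-\phi_j(q))/(p-q)$ with $p=\avg{\gamma_{j-1}^v}$ and $q=e_{j-1}^{\avg v}$, so $p-q=\Delta_{j-1}(v)\neq 0$. Such a quotient is nonnegative because $\phi_j$ is nondecreasing. Since $a_j>0$, every factor $a_j^{-1}L_j(v)$ in \eqref{eq:cascade-weight} is nonnegative, and so is $a_k^{-1}$. Hence $W_k(v)\ge0$ for all $k$, and \eqref{eq:exact-goe} is a sum of nonnegative terms, which gives $\GOE(v)\ge 0$.

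There is no real obstacle. The only point needing care is that monotonicity is needed only for $\phi_2,\dots,\phi_n$. The map $\phi_1$ never enters any weight, since $W_k$ involves only $L_j$ with $j\ge k+1\ge 2$. Convexity of $\phi_1$ enters only through $J_1$. I would also note that continuity of $\phi_k$ and of the periodic orbits ensures all averages are finite, so Jensen's inequality applies without integrability issues.
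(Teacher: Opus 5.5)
Your proof is correct and follows the paper's own argument: Jensen's inequality gives $J_k(v)\ge 0$, monotonicity of $\phi_2,\dots,\phi_n$ gives $L_j(v)\ge 0$ and hence $W_k(v)\ge 0$, and Theorem~\ref{thm:exact-decomp} then yields $\GOE(v)\ge 0$. The paper leaves the concave case implicit, and your reduction of it via $z=-x$, using the maps $-\phi_1$ and $s\mapsto -\phi_i(-s)$, is a valid way to fill that in.
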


\begin{proof}
We prove the result for the case where all the~$\phi_k$s are convex.   Jensen's inequality 
gives~$J_k(v)\geq0$ for every~$k$.
The assumption that $\phi_2,\ldots,\phi_n$ are nondecreasing gives   
  $L_j(v)\geq0$ for $j=2,\ldots,n$, so all the weights
$W_k(v)$ are nonnegative. Now~\eqref{eq:exact-goe}  gives~$\GOE(v)\geq0$.
\end{proof}

In general, convexity [concavity]  properties of the~$\phi_k$s are not enough to deduce the sign of GOE, due to the effect of the~$L_j$s.  
\begin{example}[Convex $\phi_i$s yet negative GOE]
\label{exa:negative}
Consider the two-stage cascade
\begin{align*}
    \dot x_1
    &=
    -x_1+v^2,
   \\
    \dot x_2
    &=
    -x_2-x_1,\\
 y&=x_2, 
\end{align*}
which is   of the form
\eqref{eq:cascade-general} with~$n=2$, 
$\phi_1(s)=s^2$, $\phi_2(s)=-s$, and~$a_1=a_2=1$.
 Note that both $\phi_i$s
are convex. 
In this case, $\phi_2$ is affine, so~$J_2=0$
and~\eqref{eq:exact-goe} 
gives
\begin{align*}
    \GOE (u) &= W_1(u) J_1(u)+W_2(u) J_2(u) \\
          &=     L_2(u) J_1(u)\\
          &=-J_1(u).
\end{align*}

Consider the control
$
    v(t)=a\sin(\omega t),
 $ with~$    a,\omega>0$.  Then 
 $
    J_1(v)
    =
    \avg{v^2}
    =
    a^2/2$,
    so $
    \GOE(v)
    =
    - {a^2}/{2}
    <0$.
Thus, the positive Jensen gap created by the strictly convex upstream map~$\phi_1$ is sign-reversed by the decreasing downstream map~$\phi_2$.
\end{example}

The proof of 
Corollary~\ref{cor:sign} 
suggests simple conditions guaranteeing  strict positivity [negativity] of~GOE. The next remark demonstrates this.

\begin{remark}[Strict positivity of GOE]
\label{rem:strict}
Suppose that the functions~$\phi_1,\ldots,\phi_n$ are convex,   that the functions 
$\phi_2,\ldots,\phi_n$ are nondecreasing, 
and that there
 exists an index $k$ such that
   $ J_k(v)>0$, 
and
 $   L_j(v)>0$,
    for~$ j=k+1,\ldots,n$.
Then $\GOE(v)>0$. 
(Note that~$J_k(v)>0$ whenever~$\phi_k$ is strictly convex on the range of
its periodic argument,
and that argument is nonconstant.)
\end{remark}

\subsection{Bregman-Divergence Representation}
Recall that if~\(g:\mathbb R^n\to\mathbb R\) is $C^1$  
then the function~$D_g:\R^n\times\R^n\to\R$
defined by
\begin{equation}\label{eq:d_geq}
D_g(x,z):=
g(x)-g(z)-(\nabla g(z))^\top (x-z) 
\end{equation}
is called the
Bregman divergence associated with~\(g\)~\cite{bregman1967}.
Geometrically, $D_g(x,z)$
  measures how much the graph of $g$ lies above its tangent plane at~$z$. 
The function~$D_g$
     is generally
neither symmetric nor a metric, but its sign is directly related to the
curvature of the underlying function. In particular, it is nonnegative for
convex functions,  and nonpositive for concave functions~\cite{bregman1967}.

Suppose now that~$\phi:\R\to\R$ is~$C^1$, and 
let $u$ be a scalar $T$-periodic control.
Fix $t\in[0,T)$. Then
\begin{align*}
D_\phi(u(t),\avg{u})&=
\phi(u(t))-\phi(\avg{u})
-\phi'(\avg {u})(u(t)-\avg{u}),
\end{align*}
so
\begin{align*}
\avg{D_\phi(u(t),\avg{u}) }&=
\avg {\phi(u)}-\phi(\avg{u}).
\end{align*}
Comparing this with \eqref{eq:jensen-internal} and using Theorem~\ref{thm:exact-decomp} yields the following result.

\begin{corollary}
[Geometric decomposition for GOE.]
\label{prop:bregman} 
Suppose  that all the~$\phi_i$s are~$C^1$, and
define the local average Bregman terms by
\begin{align}\label{eq:big_b}
    B_i(v)
    &:=
    \avg{
    D_{\phi_i}
    \bigl(
    \gamma_{i-1}^v,
    \avg{\gamma_{i-1}^v}
    \bigr)
    },
    \qquad i=1,\ldots,n.
 \end{align}
Then
\begin{equation}
    \GOE(v)
    =
    \sum_{k=1}^n W_k(v)B_k(v).
    \label{eq:bregman-goe}
\end{equation}
\end{corollary}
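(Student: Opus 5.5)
The plan is to show that $B_i(v)=J_i(v)$ for every $i=1,\ldots,n$. Once this holds, \eqref{eq:bregman-goe} follows by substituting $B_k(v)$ for $J_k(v)$ in the exact decomposition \eqref{eq:exact-goe} of Theorem~\ref{thm:exact-decomp}. The weights $W_k(v)$ stay the same, since they depend only on the $a_j$ and the incremental gains $L_j(v)$, and neither is affected.

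To prove $B_i(v)=J_i(v)$, fix $i$ and write $q:=\gamma_{i-1}^v$, with the convention \eqref{eq:gamm0} that $\gamma_0^v=v$. Then $q$ is continuous and $T$-periodic. For $i=1$ this is the assumption on $v$. For $i\ge 2$, $q$ is a periodic solution of a linear ODE with a continuous forcing term, so it is even $C^1$. Since $\phi_i$ is $C^1$, the function $t\mapsto D_{\phi_i}(q(t),\avg{q})$ is continuous and $T$-periodic, and its average is well defined.

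Expanding the definition \eqref{eq:d_geq} pointwise in $t$ gives
\begin{equation*}
D_{\phi_i}(q(t),\avg{q})=\phi_i(q(t))-\phi_i(\avg{q})-\phi_i'(\avg{q})\bigl(q(t)-\avg{q}\bigr).
\end{equation*}
Averaging over one period and using linearity of $\avg{\cdot}$, the middle term is constant and has average $\phi_i(\avg{q})$. In the last term, $\phi_i'(\avg{q})$ is a constant multiplying $q-\avg{q}$, and $\avg{q-\avg{q}}=\avg{q}-\avg{q}=0$. Therefore $B_i(v)=\avg{\phi_i(q)}-\phi_i(\avg{q})=J_i(v)$ by \eqref{eq:jensen-internal}. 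This is exactly the computation carried out just before the statement, now applied with $u$ replaced by $\gamma_{i-1}^v$.

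The argument has no real obstacle. The only points to check are that each $\gamma_{i-1}^v$ is an admissible continuous periodic argument, which follows from the cascade structure, and that the $C^1$ assumption makes $\phi_i'(\avg{\gamma_{i-1}^v})$ well defined. No convexity is needed for the identity itself; convexity only matters when one goes on to use the sign of $B_k$.
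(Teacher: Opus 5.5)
Your proposal is correct and follows the paper's argument. The paper likewise notes that averaging $D_{\phi}(u(t),\avg{u})$ eliminates the linear term, which gives $B_i(v)=J_i(v)$, and then substitutes this into Theorem~\ref{thm:exact-decomp}; your check that each $\gamma_{i-1}^v$ is a continuous periodic argument is a detail the paper leaves implicit.
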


Thus,  every stage creates a local geometric discrepancy, and the
downstream part of the cascade propagates  this discrepancy to the output
through the weight $W_k$.

\subsection{Small-Amplitude Periodic Excitations}
\label{sec:small-amplitude}

We next study the local behavior of GOE in \eqref{eq:cascade-general} under small periodic
perturbations of a constant control.   
Consider the control 
\begin{equation}
    v_\varepsilon(t)
    =
    a +\varepsilon w(t),
    \label{eq:small-input}
\end{equation}
where $w$ is continuous and     $T$-periodic with $ \avg{w}=0$, and $\varepsilon\in\R$. Thus,~$\avg{v_\varepsilon}=a$. 
To simplify the notation, let 
\begin{equation*}
    \gamma^\varepsilon(t)
    :=
    \gamma^{v_\varepsilon}(t)
\end{equation*}
denote the unique $T$-periodic solution of the nonlinear cascade
corresponding to $v_\varepsilon$. When $\varepsilon=0$, the control is constant
and the periodic solution reduces to a unique  equilibrium:
\begin{equation*}
    \gamma^0(t)=e^{a}.
\end{equation*}

We assume throughout this section that every $\phi_i$ is $C^3$ on a
neighborhood of the relevant equilibrium and periodic limit cycle. 
Smooth
dependence of the periodic solution on $\varepsilon$ yields the uniform
expansion
\begin{equation}
    \gamma_i^\varepsilon(t)
    =
    e_i^a+\varepsilon r_i(t)
    +\varepsilon^2s_i(t)
    +O(\varepsilon^3),
    \qquad i=1,\ldots,n,
    \label{eq:orbit-expansion}
\end{equation}
with
\begin{align}\label{eq:defrisi}
    r_i(t)
    :=
    \left.
    \frac{\partial } 
    {\partial\varepsilon}\gamma_i^\varepsilon(t)
    \right|_{\varepsilon=0},
\quad
    s_i(t)
    :=
    \frac{1}{2}
    \left.
    \frac{\partial^2 } 
    {\partial\varepsilon^2}\gamma_i^\varepsilon(t)
    \right|_{\varepsilon=0}.
\end{align}
%
Note that since $\gamma_ i ^\varepsilon(t)$ is $T$-periodic, so are $r_i (t) $ and $s_i(t)$. 

It is useful to denote
\[
e^a_0:=a.
\]

We can now state our second main result. 
\begin{proposition}[GOE to second-order]
\label{prop:second-order}
Consider~\eqref{eq:cascade-general} with the control   \eqref{eq:small-input}. Then 
\begin{equation}
    \GOE(v_\varepsilon)
    =
    \frac{\varepsilon^2}{2}
    \sum_{k=1}^n
    \rho_k \phi_k''(e_{k-1}^a) c_{k-1}
    +
    O(\varepsilon^3),
    \label{eq:second-order-goe}
\end{equation}
where
\begin{align}\label{eq:defck}
    \rho_k
   & :=
    a_k^{-1}
    \prod_{j=k+1}^n a_j^{-1} \phi_j'(e_{j-1}^a) ,\nonumber \\
     c_k&:= \begin{cases}
     \avg{ w^2 }, & \text {if }      k=0,\\
     \avg{r_{k }^2} , & \text {if }      k>0. 
     \end{cases}
 \end{align}
\end{proposition}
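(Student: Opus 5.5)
The plan is to build on Theorem~\ref{thm:exact-decomp}. We have $\GOE(v_\varepsilon)=\sum_k W_k(v_\varepsilon)J_k(v_\varepsilon)$, so it suffices to establish two facts: each $J_k(v_\varepsilon)$ equals $\frac{\varepsilon^2}{2}\phi_k''(e_{k-1}^a)c_{k-1}+O(\varepsilon^3)$, and each weight satisfies $W_k(v_\varepsilon)=\rho_k+O(\varepsilon)$. Multiplying, the product is $\frac{\varepsilon^2}{2}\rho_k\phi_k''(e^a_{k-1})c_{k-1}+O(\varepsilon^3)$, because the $O(\varepsilon)$ error in $W_k$ multiplies an $O(\varepsilon^2)$ quantity. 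Summing over $k$ then gives \eqref{eq:second-order-goe}.

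\textbf{Jensen gaps.} Write $\gamma_{k-1}^\varepsilon=e_{k-1}^a+\varepsilon r_{k-1}+\varepsilon^2 s_{k-1}+O(\varepsilon^3)$ uniformly in $t$, using \eqref{eq:orbit-expansion}. For $k=1$ we have $\gamma_0^\varepsilon=a+\varepsilon w$, so $r_0=w$ and $s_0=0$. Let $m_{k-1}:=\avg{\gamma_{k-1}^\varepsilon}=e_{k-1}^a+\varepsilon\avg{r_{k-1}}+O(\varepsilon^2)$.

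First I check that $\avg{r_i}=0$ for all $i$. Differentiate the $i$th equation in $\varepsilon$ at $\varepsilon=0$ to get $\dot r_i=-a_ir_i+\phi_i'(e_{i-1}^a)r_{i-1}$. Averaging and using \eqref{eq:avgderiszero} gives $a_i\avg{r_i}=\phi_i'(e^a_{i-1})\avg{r_{i-1}}$. Since $\avg{r_0}=\avg{w}=0$, induction yields $\avg{r_i}=0$.

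Next, Taylor expand $\phi_k$ to second order with $C^3$ remainder around $m_{k-1}$:
\[
J_k=\tfrac12\avg{\phi_k''(m_{k-1})(\gamma_{k-1}^\varepsilon-m_{k-1})^2}+O(\avg{|\gamma^\varepsilon_{k-1}-m_{k-1}|^3}).
\]
The first-order term averages to zero exactly. Because $\avg{r_{k-1}}=0$, we have $\gamma^\varepsilon_{k-1}-m_{k-1}=\varepsilon r_{k-1}+O(\varepsilon^2)$. Also $\phi_k''(m_{k-1})=\phi_k''(e^a_{k-1})+O(\varepsilon)$. Together these give $J_k=\frac{\varepsilon^2}{2}\phi_k''(e_{k-1}^a)\avg{r_{k-1}^2}+O(\varepsilon^3)$, which matches $c_{k-1}$ with $r_0=w$.

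\textbf{Weights.} I need $L_j(v_\varepsilon)=\phi_j'(e_{j-1}^a)+O(\varepsilon)$. When $\Delta_{j-1}\neq0$, the mean value theorem gives $L_j=\phi_j'(\xi)$ for some $\xi$ between $e_{j-1}^a$ and $m_{j-1}$. When $\Delta_{j-1}=0$ the convention sets $L_j=0$; this case needs care, addressed below. Since $m_{j-1}-e^a_{j-1}=\Delta_{j-1}=O(\varepsilon^2)$, which follows by induction from the recursion in the proof of Theorem~\ref{thm:exact-decomp} with $J_i=O(\varepsilon^2)$, we get $\xi=e_{j-1}^a+O(\varepsilon^2)$ and hence $L_j=\phi_j'(e^a_{j-1})+O(\varepsilon^2)$. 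Products of these factors give $W_k=\rho_k+O(\varepsilon)$.

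\textbf{Main obstacle.} The delicate point is the degenerate convention $L_j=0$ when $\Delta_{j-1}=0$, which destroys the uniform approximation $L_j\approx\phi_j'$. I would avoid it entirely by redoing the recursion with $L_j$ replaced by the mean-value slope $\tilde L_j$. This slope satisfies \eqref{eq:ensuresthat} equally well, whatever value is chosen when $\Delta_{j-1}=0$; take $\tilde L_j:=\phi_j'(e^a_{j-1})$ there. The derivation in the proof of Theorem~\ref{thm:exact-decomp} only used \eqref{eq:ensuresthat}, so \eqref{eq:exact-goe} holds with $\tilde L_j$, and the estimate above then goes through uniformly.

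The remaining technical assumption is the smooth dependence giving uniform $O(\varepsilon^3)$ remainders in \eqref{eq:orbit-expansion}, which the paper posits. I would justify it inductively via the explicit convolution formula $\gamma_i(t)=\int_{-\infty}^t e^{-a_i(t-\tau)}\phi_i(\gamma_{i-1}(\tau))\,d\tau$.
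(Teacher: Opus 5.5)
Your proof is correct, but it takes a different route from the paper's.

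\textbf{The paper's route.} The paper never invokes Theorem~\ref{thm:exact-decomp} here. It substitutes \eqref{eq:orbit-expansion} directly into the cascade and derives two sets of equations: the first-order variational cascade for the $r_i$, and a second-order cascade for the $s_i$ forced by $\frac12\phi_i''(e_{i-1}^a)r_{i-1}^2$. It then averages the second-order cascade using \eqref{eq:avgderiszero} and iterates the resulting linear recursion for $\avg{s_i}$ to obtain $\avg{s_n}$.

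\textbf{Your route.} You treat the proposition as the small-amplitude limit of the exact decomposition. You show that $J_k(v_\varepsilon)=\frac{\varepsilon^2}{2}\phi_k''(e_{k-1}^a)c_{k-1}+O(\varepsilon^3)$ and that $W_k(v_\varepsilon)\to\rho_k$.

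\textbf{What your approach buys.} It gives a structural explanation: $\rho_k$ is exactly the limit of the propagation weight $W_k$, and the curvature-times-energy factor is the leading term of the local Jensen gap. It also needs less regularity in $\varepsilon$. Although you write the second-order expansion, you never use $s_i$. You only need a uniform first-order expansion $\gamma_i^\varepsilon=e_i^a+\varepsilon r_i+O(\varepsilon^2)$ together with $\avg{r_i}=0$, plus the recursion giving $\Delta_{j-1}=O(\varepsilon^2)$.

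\textbf{What it costs.} You must deal with the degenerate convention $L_j=0$ when $\Delta_{j-1}=0$, which would break $W_k\approx\rho_k$ for those $\varepsilon$. Your fix is sound: replace $L_j$ by a mean-value slope, set to $\phi_j'(e^a_{j-1})$ in the degenerate case. This is legitimate because the proof of Theorem~\ref{thm:exact-decomp} uses only \eqref{eq:ensuresthat}.

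\textbf{What the paper's approach buys.} The direct expansion avoids the degenerate-slope issue altogether. As a byproduct it produces the second-order variational equations. Their averaged recursion is precisely the infinitesimal form of your $\Delta_i$ recursion, with $J_i\approx\frac{\varepsilon^2}{2}\phi_i''c_{i-1}$, $L_i\approx\phi_i'$, and $\Delta_i\approx\varepsilon^2\avg{s_i}$.
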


Eq. \eqref{eq:second-order-goe} shows  that  three mechanisms  
determine the leading-order GOE. At stage $k$, $\phi_k''(e_{k-1}^a)$ is the
local curvature of the nonlinear map, $c_{k-1}$ is the mean-square amplitude
of the first-order fluctuation incident on that map, and $\rho_k$ is the
differential gain through which the resulting mean displacement propagates
from stage $k$ through stages $k+1,\ldots,n$ to the output.

\begin{proof}
Since the output is $y=x_n$,  we have 
\begin{align}\label{eq:goe_pertu}
    \GOE(v_\varepsilon)
    &=
    \avg{\gamma_n^\varepsilon}-e_n^a
    \nonumber\\
    &=
    \varepsilon\avg{r_n}
    +
    \varepsilon^2\avg{s_n}
    +
    O(\varepsilon^3),
\end{align}
so we need to compute~$\avg{r_n}$ and~$\avg{s_n}$. 
A Taylor expansion gives  
\begin{align}\label{eq:expa1}
    \phi_1( v_\varepsilon)
    &= \phi_1(a+\varepsilon w  ) \nonumber\\
    &=\phi_1(a)+\varepsilon \phi_1'(a) w  +
    \frac{\varepsilon^2} {2}   \phi_1'' (a)     w^2 
    +
    O(\varepsilon^3)  , 
\end{align}
and for~$i>1$, we have 
\begin{align}
    \phi_i(\gamma_{i-1}^\varepsilon)
    &= \phi_i( e_{i-1}^a+\varepsilon r_{i-1} 
    +\varepsilon^2 s_{i-1} 
    +O(\varepsilon^3) ) \nonumber\\&=
    \phi_i(e_{i-1}^a)
    +
    \varepsilon \phi_i'(e_{i-1}^a)  r_{i-1}
    \nonumber\\
    &\quad
    +
    \varepsilon^2
    \left(
    \phi_i'(e_{i-1}^a) s_{i-1}
    +
    \frac{\phi_i''(e_{i-1}^a)}{2}r_{i-1}^2
    \right)
    +
    O(\varepsilon^3).
    \label{eq:nonlinear-local-expansion}
\end{align}
Substituting~\eqref{eq:orbit-expansion}, \eqref{eq:expa1},    and~\eqref{eq:nonlinear-local-expansion} in~\eqref{eq:cascade-general} and equating the
coefficients of $\varepsilon$ yields the first-order variational cascade
\begin{align}
    \dot r_1
    &=
    -a_1r_1+\phi_1'( a) w,
     \nonumber\\
    \dot r_i
    &=
    -a_ir_i+\phi_i'(e_{i-1}^a)  r_{i-1},
    \qquad i=2,\ldots,n.
    \label{eq:first-general}
\end{align}
 Averaging these
equations over one period gives
\begin{align*}
    \avg{r_1}
    &=
   a_1^{-1} \phi_1'( a) \avg{w}=0,\\
    \avg{r_i}
    &=
  a_i^{-1}  \phi_i'(e_{i-1}^a) \avg{r_{i-1}},
    \qquad i=2,\ldots,n,
\end{align*}
and  iterating 
this gives 
\begin{align}\label{eq:avg_ri}
    \avg{r_i}=0,\quad 
    i=1,\ldots,n.
\end{align}

Equating the coefficients of $\varepsilon^2$ gives the second-order
variational equations
\begin{align}
    \dot s_1
    &=
    -a_1s_1+\frac{\phi_1''(a)}{2} w ^2 ,
   \nonumber \\
    \dot s_i
    &=
    -a_is_i+ \phi_i'(e_{i-1}^a) s_{i-1}
    +\frac{\phi''_i(e_{i-1}^a)}{2}r_{i-1}^2,
    \qquad i=2,\ldots,n.
    \label{eq:second-general}
\end{align}
Averaging \eqref{eq:second-general}   over one period gives
\begin{align*}
    \avg{s_1}
  &  =
\frac1{2}   a_1^{-1}  \phi_1''( a)  c_0   , \\
    \avg{s_i} &  =
  a_i^{-1}   \phi_i'(e_{i-1}^a) \avg{s_{i-1}}
    +
\frac1{2}  a_i^{-1}  \phi_i''(e_{i-1}^a) c_{i-1} ,
    \qquad i=2,\ldots,n.
    \end{align*}
Iterating  this recursion from stage $1$ to the stage~$n$  gives 
\begin{equation*}
    \avg{s_n}
    =
    \frac{1}{2}
    \sum_{k=1}^n\rho_k   \phi_k''(e_{k-1}^a) c_{k-1}.
    \label{eq:snmean}
\end{equation*}
Substituting this and~\eqref{eq:avg_ri} in~\eqref{eq:goe_pertu} completes the proof of  Prop. \ref{prop:second-order}. 
\end{proof}

\section{An Application: Periodic Substrate Feeding in an Enzymatic
Michaelis--Menten Cascade}
\label{sec:enzymatic-cascade}
In biochemical and enzymatic processes,   a substrate may be supplied as a constant feeding
or according to a periodic
feeding schedule, with both protocols having the same mean concentration.
Because enzymatic reaction rates are typically nonlinear and saturating,
these two feeding strategies need not generate the same average terminal
product. 
A natural question is whether periodic modulation may outperform  the constant one. 

We now apply the theoretical results derived above to analyze GOE in an open enzymatic cascade with
Michaelis--Menten kinetics, that is, 
\begin{align}
    \dot x_1(t)
    &=
    -a_1x_1(t)
    +
    \frac{V_1v(t)}{K_1+v(t)},
    \nonumber\\
    \dot x_i(t)
    &=
    -a_ix_i(t)
    +
    \frac{V_ix_{i-1}(t)}
    {K_i+x_{i-1}(t)},
    \quad i=2,\ldots,n,
    \label{eq:mm-cascade-internal}\\
    y(t)&=x_n(t),
    \nonumber 
\end{align}
with $
    a_i,    V_i,
    K_i>0$.
Here,     $v(t)\geq0$ is  the concentration of an externally supplied
substrate or biochemical activator.
The constant
 $a_i$ describes  the first-order degradation, dilution, or deactivation rate
at stage $i$, $V_i$ is the limiting enzymatic rate, and~$K_i$ is the
Michaelis constant. The output $y=x_n$ represents the concentration or
activity of the terminal product.

   Note that \eqref{eq:mm-cascade-internal} 
   is in the form \eqref{eq:cascade-general} with
 \begin{align}\label{eq:enzi+phis} 
 \phi_i(z)
    =
    \frac{V_iz}{K_i+z}.
\end{align} 
 It is straightforward to verify that \eqref{eq:mm-cascade-internal} is a cooperative control system~\cite{monocontsystems}, and that the set $\{x\in\R^ n : x_i \geq 0\} $ is  forward invariant.

The first and second derivatives of~\eqref{eq:enzi+phis} are
\begin{align*}
    \phi_i'(z)
    &=
    \frac{V_iK_i}{(K_i+z)^2},
    \\
    \phi_i''(z)
    &=
    -\frac{2V_iK_i}{(K_i+z)^3}.
\end{align*}
Thus, every $\phi_i$ is strictly increasing and strictly concave on
$\R_{\geq0}$.
   
The following result gives both the sign of GOE and an exact stage-by-stage
decomposition of the loss caused by periodic substrate feeding.

\begin{corollary}[Enzymatic fluctuation penalty]
\label{prop:mm-fluctuation-penalty}
Consider the enzymatic cascade
\eqref{eq:mm-cascade-internal} with a continuous,
nonnegative, $T$-periodic control $v$. Then
\begin{align}\label{eq:mm-exact-goe}
    \GOE(v)
  &  =
    -
    \sum_{k=1}^n
    W_k(v)
    \frac{V_k K_k}{(K_k+ \avg{ \gamma_{k-1}^v})^2}
    \avg{
    \frac{( \gamma_{k-1}^v-\avg{ \gamma_{k-1}^v} )^2}{K_k+ \gamma_{k-1}^v}
    } \\
    &\leq 0\nonumber  ,     
\end{align}
where the~$W_k$s are defined in 
\eqref{eq:cascade-weight},
with 
\begin{equation*}
    L_j(v)
    =
    \frac{V_jK_j}
    {
    (K_j+\avg{\gamma_{j-1}^v})
    (K_j+e_{j-1}^{\avg v})
    }
    >0.
    \label{eq:mm-secant-gain}
\end{equation*}
Furthermore, if $v$ is nonconstant then
\begin{equation}
    \GOE(v)<0.
    \label{eq:mm-goe-strict}
\end{equation}
\end{corollary}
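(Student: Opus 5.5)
The plan is to specialize Theorem~\ref{thm:exact-decomp} to $\phi_i(z)=V_iz/(K_i+z)$ and compute the Jensen gaps $J_k$ and the incremental gains $L_j$ in closed form. Throughout, all arguments are nonnegative: $v\geq 0$, and forward invariance of the nonnegative orthant means that, starting from $x(0)\geq0$, the limit cycle satisfies $\gamma_i^v(t)\geq 0$ for all $i$ and $t$. The same holds for the equilibrium $e^{\avg v}$, since each $\phi_i\geq0$ on $\R_{\geq0}$. Hence every denominator $K_k+z$ that appears is at least $K_k>0$.

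\emph{Step 1: the gains $L_j$.} For $p,q\geq0$ we have
\[
\phi_j(p)-\phi_j(q)=\frac{V_jK_j(p-q)}{(K_j+p)(K_j+q)}.
\]
Taking $p=\avg{\gamma_{j-1}^v}$ and $q=e_{j-1}^{\avg v}$ in \eqref{eq:secant-gain} gives the stated formula for $L_j(v)$ whenever $\Delta_{j-1}(v)\neq0$. This expression is well defined for every $v$, so by the convention after \eqref{eq:ensuresthat} we use it for all $v$. It is strictly positive, and therefore $W_k(v)>0$ for every $k$.

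\emph{Step 2: the Jensen gaps.} Write $m=\avg{\gamma_{k-1}^v}$ and $g=\gamma_{k-1}^v$. Using $\phi_k(g)-\phi_k(m)=V_kK_k(g-m)/((K_k+g)(K_k+m))$, the identity
\[
\frac{1}{K_k+g}=\frac{1}{K_k+m}-\frac{g-m}{(K_k+m)(K_k+g)},
\]
and $\avg{g-m}=0$, I get
\[
J_k(v)=\avg{\phi_k(g)-\phi_k(m)}=-\frac{V_kK_k}{(K_k+m)^2}\avg{\frac{(g-m)^2}{K_k+g}}.
\]
Substituting this into \eqref{eq:exact-goe} gives \eqref{eq:mm-exact-goe}. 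Since each $W_k>0$ and each $J_k\leq0$, the bound $\GOE(v)\leq0$ follows. Alternatively, Corollary~\ref{cor:sign} in its concave version gives the inequality directly.

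\emph{Step 3: strictness.} This is the only step with real content. By Remark~\ref{rem:strict} in its concave form, it suffices to exhibit one $k$ with $J_k(v)<0$, since all the $L_j$ are positive. I take $k=1$, with $g=\gamma_0^v=v$. If $v$ is nonconstant and continuous, then $(v-\avg v)^2$ is continuous, nonnegative, and positive on an open set, while $K_1+v$ is bounded on $[0,T]$. Hence the average in $J_1$ is strictly positive, so $J_1(v)<0$. Then $W_1(v)J_1(v)<0$, and because all the other terms are $\leq0$, we obtain $\GOE(v)<0$.

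The main obstacle is bookkeeping rather than any analytic difficulty: we must make sure the nonnegativity of the orbits is justified, so that no denominator vanishes, and that $L_j$ is taken in its well-defined closed form even when $\Delta_{j-1}(v)=0$, so that $W_1>0$ holds unconditionally.
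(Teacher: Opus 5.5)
Your proposal is correct and follows the paper's proof essentially step by step. It uses the same closed-form secant gain $L_j$ (taken for all $v$ because it is well defined when $p=q$), applies Theorem~\ref{thm:exact-decomp}, and gets strictness from $J_1(v)<0$ together with $W_1(v)>0$. The one cosmetic difference is that you derive $J_k$ from a partial-fraction identity and $\avg{g-m}=0$, whereas the paper averages the explicit Bregman divergence $D_{\phi_k}(\gamma_{k-1}^v,\avg{\gamma_{k-1}^v})$; this is the same computation. Your explicit check that all orbits and equilibria stay nonnegative, so no denominator vanishes, is a point the paper leaves implicit.
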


Thus, for this  enzymatic-cascade model, a constant  substrate
feeding produces a larger terminal-product concentration, on average, than
any nonconstant periodic feeding protocol with the same mean substrate concentration.

\begin{proof}
  We begin by computing   the $B_i$s in~\eqref{eq:big_b}. Eq.~\eqref{eq:d_geq} gives
\begin{align*}
D_{\phi_i} (x,z) &= \phi_i(x)-\phi_i(z) -\phi'_i(z)(x-z)\\
& =   -
    \frac{V_iK_i(x-z)^2}
    {(K_i+x)(K_i+z)^2}  . 
\end{align*}
Thus,
\[
 J_i(v)
      =
    \avg{
    D_{\phi_i}
    \bigl(
    \gamma_{i-1}^v,
    \avg{\gamma_{i-1}^v}
    \bigr)
    }=  
    - \frac{ V_iK_i } { (K_i+\avg  {\gamma_{i-1}^v)^2} } 
 \avg{   \frac{ ( \gamma_{i-1}^v-\avg { \gamma_{i-1}^v})^2}
    { K_i+ \gamma_{i-1}^v  }  
}  .
\]
 Note that every~$J_i$ is nonpositive. 
We next compute the   gains in~\eqref{eq:secant-gain}. For any $p,q\geq0$, with $p\neq q$, we have 
\begin{equation*}
    \frac{\phi_i(p)-\phi_i(q)}{p-q}
    =
    \frac{V_iK_i}
    {(K_i+p)(K_i+q)}. 
\end{equation*}
The right-hand side of this equation is well-defined even when $p=q$, 
 so~\eqref{eq:secant-gain} yields 
 \begin{equation*}
    L_i(v)
     = \frac{V_iK_i}
    {(K_i+\avg{\gamma_{i-1}^v})(K_i+e_{i-1}^{\avg {v}})}.
      \end{equation*}
 Theorem~\ref{thm:exact-decomp} yields~\eqref{eq:mm-exact-goe}.
 Since every term on the right-hand side of
\eqref{eq:mm-exact-goe} is nonpositive, we conclude that
   $ \GOE(v)\leq0$.

To prove~\eqref{eq:mm-goe-strict}, suppose that the control $v$ is nonconstant. Then
 $   \avg{
    \frac{(v-\avg v)^2}{K_1+v}
    }
    >0$, 
so $J_1(v)<0$. Since $W_1(v)>0$, at least one term in the sum in~\eqref{eq:mm-exact-goe} is strictly negative,  so
    $\GOE(v)<0$.
\end{proof}

\begin{example}\label{exa:enzi1}
    
We illustrate the enzymatic fluctuation penalty using a specific
example of~\eqref{eq:mm-cascade-internal} with~$n=3$. 
The (arbitrarily chosen) 
parameter values are~$a_i=1$ [$\mathrm{sec}^{-1}$],
$\begin{bmatrix} V_1&V_2&V_3 \end{bmatrix}=
\begin{bmatrix}
    4&2&3
\end{bmatrix}$ [$\mathrm{mM}\,\mathrm{sec}^{-1}$],
and
$\begin{bmatrix} K_1&K_2&K_3 \end{bmatrix}=
\begin{bmatrix}
    2&1&3/2
\end{bmatrix}$ [$\mathrm{mM}$]. 
The periodic substrate input is 
 $
u(t)= 1+0.9 \sin(  t) 
 $  [$\mathrm{mM}$], which is~$T$-periodic with
 $T=  2\pi$  [$\mathrm{sec}$].
Thus, $0.1\leq u(t) \leq 1.9$  for all~$t$. All the simulations were performed using MATLAB. 

The corresponding constant input is~$v(t)=\avg{u}=1$ and the corresponding equilibrium is:
\begin{align}  
e_1^{\avg u} & =\frac{4 \avg {u}}{2+\avg u} = 4/3, \\
e_2^{\avg u} & =\frac{2 e_1^{\avg {u}} }{1+e_1^{\avg u}} =8/7 ,  \\
e_3^{\avg u} & =\frac{3 e_2^{\avg {u}} }{1.5+e_2^{\avg u}} =48/37,   \label{eq:eqa1}
\end{align}
in units of [$\mathrm{mM}$].
Fig.~\ref{fig:limit} 
depicts convergence to the  periodic solution~$\gamma^u$ 
from~$x(0)=0$. The equilibrium $e^ {\avg u} $  corresponding to the constant control is   marked by an~x. 

\begin{figure}[t]
 \centering
 \includegraphics[scale=0.8]{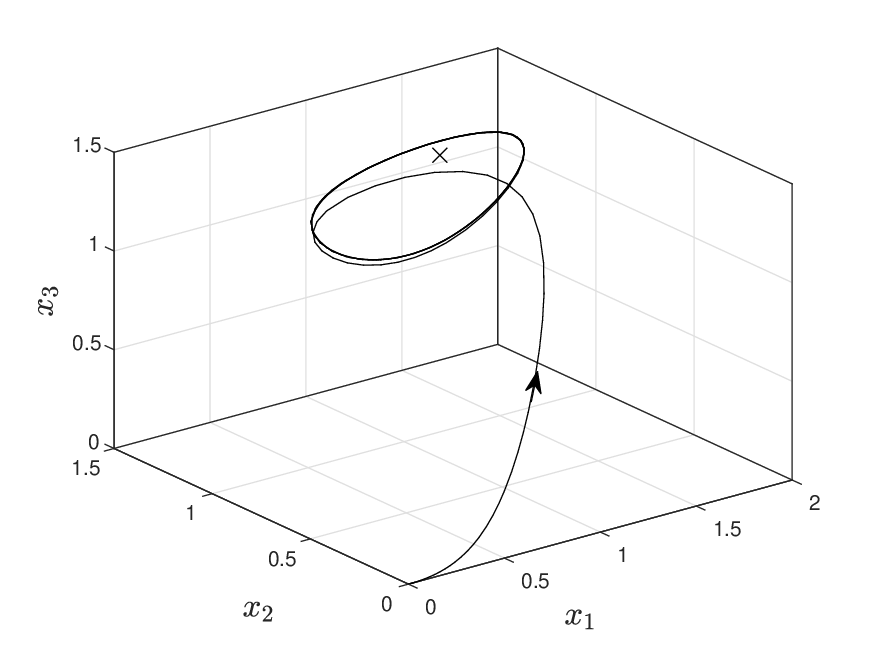}
 \caption{Convergence to the limit cycle~$\gamma^u$ from $x(0)=0$. The equilibrium corresponding to the constant control~$v=\avg u=1$ is marked by an x.}
 \label{fig:limit}
\end{figure}

  Fig.~\ref{fig:peri_waves} depicts the periodic control and its average  and also the coordinates of~$\gamma^u(t)$ vs~$e^{\avg u}$. It may be seen that~$\avg {\gamma_i^u}< e^{\avg u}_i$, $i=1,2,3$. Consequently,
  \[
  \avg {y^\gamma}=\avg{\gamma_3^u}<e^{\avg {u}}_3=y^{\avg u}   ,  
  \]
i.e. a negative GOE. 
A numerical integration gives
$
\frac1{T}\int_0^T \gamma^u_3(t) \mathrm{d}t= 1.2109$,
so
\[
\GOE(u)= 1.2109 - (48/37) =  -0.0864.
\]
Note that the constant control produces  
\[
100 \times \frac{0.0864}{1.2109} =7.14 \%  
\]
more output than the periodic control.

  \begin{figure}[t]
 \centering
 \includegraphics[scale=0.8]{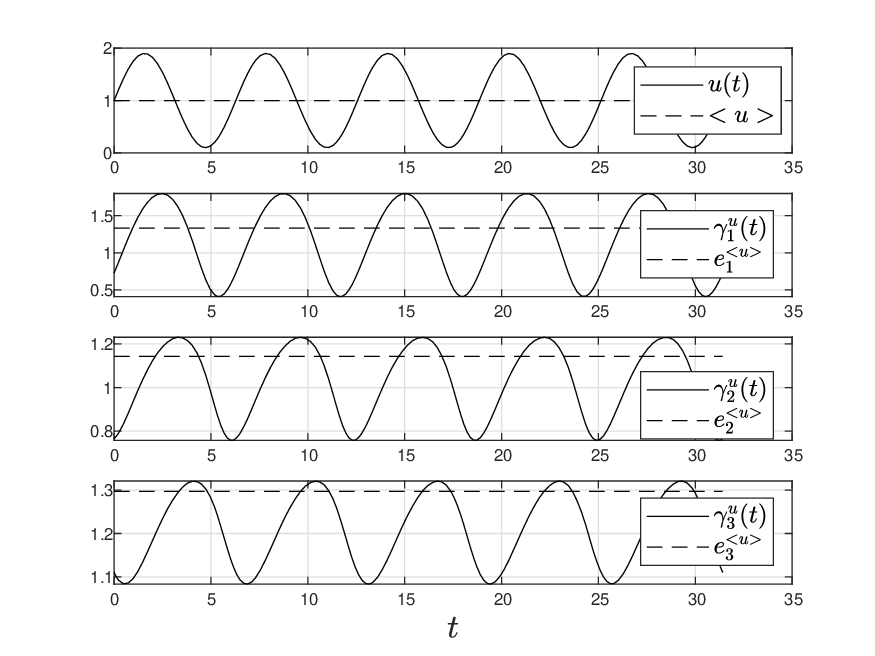}
 \caption{Propagation of the periodic substrate concentration~$u(t)=1+0.9 \sin(t)$ through the
 three-stage Michaelis--Menten cascade.   }
 \label{fig:peri_waves}
\end{figure}

\end{example}

We now use Prop.~\ref{prop:second-order}
to analyze  GOE in the enzymatic cascade~\eqref{eq:mm-cascade-internal} 
for     the perturbed control~\eqref{eq:small-input}.

\begin{corollary}[GOE to second-order in enzymatic cascade]
Consider the enzymatic cascade
\eqref{eq:mm-cascade-internal}   with the control   \eqref{eq:small-input}. Then 
\begin{equation}
    \GOE(v_\varepsilon)
    =
    \frac{\varepsilon^2}{2}
    \sum_{k=1}^n
    \rho_kd_kc_{k-1}
    +
    O(\varepsilon^3),
\end{equation}
where
 \begin{align}\label{eq:rhoke}
     \rho_k
    &=
    a_k^{-1}
    \prod_{j=k+1}^n a_j^{-1} \frac{V_j K_j}{(K_j+e_{j-1}^{\avg u})^2}  ,\nonumber\\
  d_k& = -\frac{2V_k K_k}{(K_k+e_{k-1}^{\avg u})^3}, 
\end{align}
and the $c_k$s are defined in \eqref{eq:defck}.
\end{corollary}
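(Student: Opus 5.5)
This is a direct specialization of Prop.~\ref{prop:second-order}, so the plan is to verify its hypotheses for the Michaelis--Menten nonlinearities~\eqref{eq:enzi+phis} and then substitute the explicit derivatives. Throughout, the mean $a$ of $v_\varepsilon$ in~\eqref{eq:small-input} plays the role of $\avg{u}$ in~\eqref{eq:rhoke}, so $e_{k-1}^{\avg u}=e_{k-1}^a$ with $e_0^a=a$.

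\textbf{Regularity.} Prop.~\ref{prop:second-order} requires every $\phi_i$ to be $C^3$ near the equilibrium and the periodic orbit. Each $\phi_i(z)=V_iz/(K_i+z)$ is rational with its only pole at $z=-K_i<0$, so it is real-analytic on $(-K_i,\infty)$. Assume $a>0$, as in the feeding setting. Then the equilibrium satisfies $e_i^a=a_i^{-1}\phi_i(e_{i-1}^a)>0$ for all $i$, by induction. The unperturbed orbit is the constant $e^a$, and the expansion~\eqref{eq:orbit-expansion} keeps $\gamma^\varepsilon$ uniformly $O(\varepsilon)$-close to $e^a$. Hence for $|\varepsilon|$ small enough every argument $\gamma_{k-1}^\varepsilon(t)$, including $v_\varepsilon(t)$ itself, stays in a compact subset of $(-K_k,\infty)$, so the $C^3$ hypothesis holds on a neighborhood of all relevant points.

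This regularity check is the only step with any content. The subtle point is that the argument of $\phi_1$ is the input, not a state, so I rely on $a>0$ and small $\varepsilon$ rather than on the forward invariance of the nonnegative orthant. If one wants to allow $a=0$, I would instead note that $\phi_1$ is still smooth at $0$, since $K_1>0$, which suffices.

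\textbf{Substitution.} Apply~\eqref{eq:second-order-goe}, which gives
\[
\GOE(v_\varepsilon)=\frac{\varepsilon^2}{2}\sum_{k=1}^n \rho_k\,\phi_k''(e_{k-1}^a)\,c_{k-1}+O(\varepsilon^3).
\]
Here $c_{k-1}$ is exactly as in~\eqref{eq:defck}. Now insert the derivatives computed before Corollary~\ref{prop:mm-fluctuation-penalty}:
\[
\phi_j'(e_{j-1}^a)=\frac{V_jK_j}{(K_j+e_{j-1}^a)^2},\qquad \phi_k''(e_{k-1}^a)=-\frac{2V_kK_k}{(K_k+e_{k-1}^a)^3}.
\]
Substituting the first into the definition of $\rho_k$ in~\eqref{eq:defck} gives the expression for $\rho_k$ in~\eqref{eq:rhoke}. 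The second expression is precisely $d_k$. This yields the claimed formula.

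As a consistency check, I would remark that $\rho_k>0$, $d_k<0$ and $c_{k-1}\ge 0$. The leading term is therefore nonpositive, in agreement with Corollary~\ref{prop:mm-fluctuation-penalty}. It is strictly negative when $w\not\equiv 0$, because then $c_0=\avg{w^2}>0$.
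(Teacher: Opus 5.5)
Your proposal is correct and follows essentially the same route as the paper, which gives no separate proof: the corollary is obtained by substituting $\phi_j'(e_{j-1}^a)=V_jK_j/(K_j+e_{j-1}^a)^2$ and $\phi_k''(e_{k-1}^a)=-2V_kK_k/(K_k+e_{k-1}^a)^3$ into Prop.~\ref{prop:second-order}. Your additional regularity check, which keeps the input and the $O(\varepsilon)$-close orbit away from the poles at $-K_i$, is a sound detail that the paper leaves implicit.
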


\begin{example} 
Consider the enzymatic cascade  with $n=3$ and parameters as in Example~\ref{exa:enzi1}, and the control
\[
v_\varepsilon(t)=a+\varepsilon \sin(t) \quad [\mathrm{mM}],
\]
with $a=1$. 
The equilibrium $e$ corresponding to the constant control~$\avg{v_\varepsilon}=a=1$ is given in~\eqref{eq:eqa1},
so~\eqref{eq:rhoke} yields 
\begin{align*}
    \rho_1 & =      \frac{V_2 K_2}{(K_2+e_{1}^{\avg u})^2}
       \frac{V_3 K_3}{(K_3+e_{2 }^{\avg u})^2} = (\frac{18}{37})^2  ,\\
 \rho_2 & =     
       \frac{V_3 K_3}{(K_3+e_{2 }^{\avg u})^2} =   \frac{9}{2} (\frac{14} {37}) ^2   ,\\
\rho_3 & =1  ,\\
d_1 & = -\frac{2V_1 K_1}{(K_1+a)^3} = -\frac{16}{27},\\
d_2 & = -\frac{2V_2 K_2}{(K_2+ e_1^{\avg u})^3} = - 4 (\frac{3}{7})^3,\\
d_3 & = -\frac{2V_3 K_3}{(K_3+e^{\avg u}_2)^3} = - 9
(\frac{14}{37})^3.
\end{align*}
We numerically computed $\GOE(v _\varepsilon)$ for different values of $\varepsilon$ 
using  two different  ways. The first is by computing
\begin{align}\label{eq:numcomp1}
\GOE(v _\varepsilon)=\frac1{T}\int_0^ T \gamma ^\varepsilon_3(t) \mathrm{d}t - e_3^{\avg u}. 
\end{align}
The second approach is by  using the second-order approximation 
\begin{align}\label{eq:numcomp2}
\GOE(v _\varepsilon)\approx 
\frac{\varepsilon^2}{2}
    \sum_{k=1}^3
    \rho_kd_kc_{k-1},
\end{align}
 with the $c_k$s computed after numerically integrating the $r _i$s in 
 \eqref{eq:first-general}, 
 with initial condition
  $r_i (0) =\frac{\gamma ^\varepsilon_i(0
  )- e_i^{\avg u}}{\varepsilon}$
   (see \eqref{eq:defrisi}).

  \begin{figure}[t]
 \centering
 \includegraphics[scale=0.8]{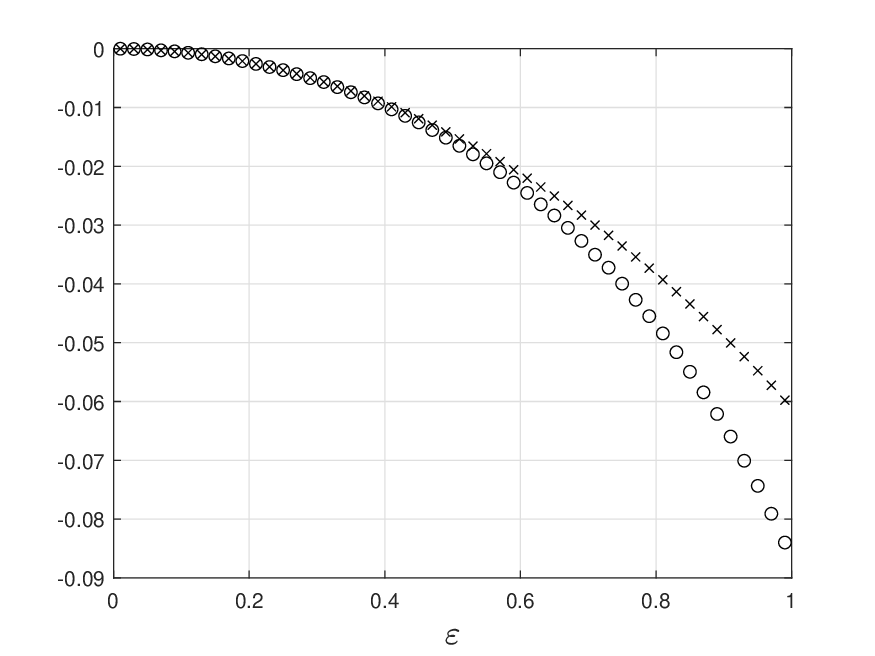}
 \caption{Comparing the values in Eqs. \eqref{eq:numcomp1} 
 (marked by 'o')
 and \eqref{eq:numcomp2}
(marked by 'x') as a function of $\varepsilon \in [0.01,1]$.  }
 \label{fig:epsi}
\end{figure}

 Fig. \ref{fig:epsi} depicts the  values in \eqref{eq:numcomp1} 
 (marked by 'o')
 and \eqref{eq:numcomp2}
(marked by 'x') as a function of~$\varepsilon \in [0.01,1]$.
It may be seen that~$\GOE(v _\varepsilon)$ is always negative, that both graphs are quadratic in~$\varepsilon$,  and that they agree well for small values of $\varepsilon $.  
\end{example}

 \section{Discussion}

This paper considers  GOE in a cascade of scalar linear systems, each with a nonlinear output. The main result  provides an exact decomposition of GOE   into local contributions generated by each nonlinear stage and propagation weights determined by the downstream dynamics. This decomposition separates the creation of average shifts, governed by the curvature of the nonlinear maps through Jensen gaps, from their transmission through the cascade. This provides a transparent explanation of why the sign of the overall GOE depends not only on local convexity or concavity,  but also on the monotonicity of downstream interconnections.
  A  Bregman-divergence representation interprets GOE as the accumulation of local geometric discrepancies.

  The paper also provides a
   second-order expansion of GOE
   that identifies the distinct roles of local curvature, fluctuation energy, and downstream differential gains.

Topics for further research include: 
(1) proving monotonicity of GOE with respect to insertion/removal of stages;
(2) deriving bounds on GOE;
(3) optimization of periodic excitation waveforms; 
and (4) developing guidelines 
for optimal ordering of nonlinearities along the  cascade.

 We hope that the stage-by-stage viewpoint introduced here will prove useful in the analysis,  design, and optimization  of  other periodically forced nonlinear systems.

\subsection*{Acknowledgments}
RM gratefully acknowledges Mrs. Anna Charps, his lecturer during his practical engineering studies, for introducing him to cascade systems in her Industrial Automation course. Her inspiring lectures sparked an early interest in these systems and ultimately helped motivate the ideas developed in this work.

\subsection*{ Declaration of generative AI and AI-assisted technologies in the manuscript preparation process}
The authors used ChatGpt  for editing and proofreading. After using ChatGpt, the authors  reviewed and edited the content as needed and take full responsibility for the content of the published article.

\end{document}